\newcommand{\R}{\mathbb{R}}
\newcommand{\C}{\mathbb{C}}
\newcommand{\x}{\mbox{\boldmath $x$}}
\newcommand{\y}{\mbox{\boldmath $y$}}
\newtheorem{thm}{Theorem}
\newtheorem{prop}[thm]{Proposition}
\newtheorem{lemma}[thm]{Lemma}
\newtheorem{ex}{Example}
\begin{document}

\title{A simple construction of complex equiangular lines}

\author{Jonathan Jedwab and Amy Wiebe}
\address{Department of Mathematics, Simon Fraser University, 8888 University Drive, Burnaby, BC, Canada, V5A 1S6}
\date{28 April 2014 {(revised 3 January 2015)}}
\email{jed@sfu.ca, awiebe@sfu.ca}
\thanks{J. Jedwab is supported by an NSERC Discovery Grant. A. Wiebe was supported by an NSERC Canada Graduate Scholarship.}

\begin{abstract} A set of vectors of equal norm in $\C^d$ represents equiangular lines if the magnitudes of the inner product of every pair of distinct vectors in the set are equal. The maximum size of such a set is $d^2$, and it is conjectured that sets of this maximum size exist in $\C^d$ for every $d \geq 2$. We describe a new construction for maximum-sized sets of equiangular lines, exposing a previously unrecognized connection with Hadamard matrices. The construction produces a maximum-sized set of equiangular lines in dimensions 2, 3 and 8.
\end{abstract}

\maketitle

\section{Introduction} \label{S:Intro}

\allowdisplaybreaks

Equiangular lines have been studied for over 65 years \cite{Haan}, and their construction remains ``[o]ne of the most challenging problems in algebraic combinatorics'' \cite{Mahdad}. In particular, the study of equiangular lines in complex space has intensified recently, as its importance in quantum information theory has become apparent \cite{Appleby, Grassl05, Renes, ScottGrassl}. The main question regarding complex equiangular lines is whether the well-known upper bound (see \cite{Godsil}, for example) on the number of such lines is attainable in all dimensions: that is, whether there exist $d^2$ equiangular lines in $\C^d$ for all integers $d\ge 2$.  Zauner \cite{Zauner} conjectured 15 years ago that the answer is yes. This conjecture is supported by exact examples in dimensions 2, 3 \cite{DGS1, Renes}, 4, 5 \cite{Zauner}, 6 \cite{Grassl04}, 7 \cite{Appleby, Mahdad}, 8 \cite{Appleby28, Grassl05, Hoggar1, ScottGrassl}, 9--15 \cite{Grassl05, Grassl06, Grassl08}, 16 \cite{Appleby-squares}, 19 \cite{Appleby, Mahdad}, 24 \cite{ScottGrassl}, 28 \cite{Appleby28}, 35 and 48 \cite{ScottGrassl}, and by examples with high numerical precision in all dimensions $d\leq 67$ \cite{Renes, ScottGrassl}.

Hoggar~\cite{Hoggar1} gave a construction for $d=8$ in 1981. Although other examples in $\C^8$ have since been found \cite{Appleby28, Grassl05, ScottGrassl}, Hoggar's 64 lines are simplest to construct and can be interpreted geometrically as the diameters of a polytope in quaternionic space. 

M.~Appleby \cite{Appleby-abstract} observed in 2011: ``In spite of strenuous attempts by numerous investigators over a period of more than 10 years we still have essentially zero insight into the structural features of the equations [governing the existence of a set of $d^2$ equiangular lines in $\C^d$] which causes them to be soluble. Yet one feels that there must surely be such a structural feature \dots (one of the frustrating features of the problem as it is currently formulated is that the properties of an individual [set of $d^2$ equiangular lines in $\C^d$] seem to be highly sensitive to the dimension).'' In view of this, finding a structure for maximum-sized sets of equiangular lines that is common across multiple dimensions is highly desirable. One such example, due to Khatirinejad \cite{Mahdad}, links dimensions 3, 7 and 19 by constraining the ``fiducial vector'', from which the lines are constructed, to have all entries real.

In this paper we construct a maximum-sized set of equiangular lines in $\C^d$ from an order $d$ Hadamard matrix, for $d \in \{2, 3, 8\}$. This gives a new connection between the study of complex equiangular lines and combinatorial design theory. It also links three different dimensions $d$. Although we show that the construction in its current form cannot be extended to other values of $d$, we speculate that the construction could be modified to deal with other dimensions by using more than one complex Hadamard matrix.

The constructed set of 64 equiangular lines in $\C^8$ is of particular interest. It is \emph{almost flat}: all but one of the components of each of its 64 lines have equal magnitude. It turns out \cite{zhu} that this constructed set is equivalent to Hoggar's 64 lines under a transformation involving the unitary matrix
\begin{equation}
U =
\frac{1}{2}
\left (
\begin{array}{rrrrrrrr}
0  & 0  & i  & -1 & 0  & 0  & \phantom{-}1 & \phantom{-}i \\
0  & 0  & -1 & i  & 0  & 0  & i & 1 \\
-1 & i  & 0  & 0  & -i & -1 & 0 & 0 \\
-i & 1  & 0  & 0  & 1  & i  & 0 & 0 \\
0  & 0  & -i & 1  & 0  & 0  & 1 & i \\
0  & 0  & 1  & -i & 0  & 0  & i & 1 \\
1  & -i & 0  & 0  & -i & -1 & 0 & 0 \\
i  & -1 & 0  & 0  & 1  & i  & 0 & 0
\end{array}
\right ).
\label{EQ:unitary}
\end{equation}
(Specifically, the Hoggar lines are equivalent to a set $\{ \x_j \}$ of 64 lines in $\C^8$ given by Godsil and Roy \cite[p.~6]{GodsilRoy}, and our constructed set is $\{ \sqrt{2}U\x_j \}$.)
We believe that the description of the Hoggar lines presented here, using the basis obtained from a Hadamard matrix of order 8, is simpler than Hoggar's construction.

\section{Complex equiangular lines from Hadamard matrices} \label{S:construct}

We now introduce the main objects of study.

A line through the origin in $\C^d$ can be represented by a nonzero vector $\x\in\C^d$ which spans it. The angle between two lines in $\C^d$ represented by vectors $\x,\y$ is given by 
\begin{equation*} \arccos\left(\frac{{|\langle \x,\y\rangle|}}{||\x||\cdot||\y||}\right), \end{equation*}
where $\langle \x,\y\rangle$ is the standard Hermitian inner product in $\C^d$ and $||\x|| = \sqrt{|\langle \x, \x\rangle|}$ is the norm of $\x$. 
A set of $m\geq 2$ distinct lines in $\C^d$, represented by vectors $\x_1,\ldots,\x_m$, is {\em equiangular} if there is some real constant $c$ such that 
\begin{equation*}  \arccos\left(\frac{{|\langle \x_j,\x_k\rangle|}}{||\x_j||\cdot||\x_k||}\right) = c \hspace{10pt}\mbox{ for all } j\neq k. \label{EQ:equiang} \end{equation*}
To simplify notation, we can always take $\x_1,\ldots,\x_m$ to have equal norm, and then it suffices that there is a constant $a$ such that 
\begin{equation*} |\langle \x_j,\x_k\rangle| = a \hspace{10pt}\mbox{ for all } j\neq k. \label{EQ:equiang2} \end{equation*}

An order~$d$ {\em complex Hadamard matrix} is a $d \times d$ matrix, all of whose entries are in $\C$ and are of magnitude 1, for which
\begin{equation*} HH^{\dagger} = dI_d, \label{EQ:Hadamarddef} \end{equation*}
{where $H^\dagger$ is the conjugate transpose of $H$} 
(so that the rows of $H$ are pairwise orthogonal). If, additionally, the entries of $H$ are all in $\{1,-1\}$, then $H$ is called a {\em real Hadamard matrix} or just a Hadamard matrix. A simple necessary condition for the existence of a Hadamard matrix of order $d>2$ is that 4 divides~$d$; it has long been conjectured that this condition is also sufficient (see \cite{Horadam}, for example).

We call two complex Hadamard matrices $H,H'$ {\em equivalent} if there exist diagonal unitary matrices $D, D'$ and permutation matrices $P, P'$ 
such that
$$H' = D P H P' D'.$$

\begin{ex} Let $\omega = e^{2\pi i/3}$ and let $H, H^\prime$ be the following order $3$ complex Hadamard matrices:
\begin{align*}
H= \left( \begin {array}{ccc} 
1&1&1\\ \noalign{\medskip}
1&\omega&\omega^2\\ \noalign{\medskip}
1&\omega^2&\omega \end {array} \right)
& \hspace{25pt}
H^\prime= \left( \begin {array}{ccc} 
\omega&1&1\\ \noalign{\medskip}
1&\omega&1 \\ \noalign{\medskip}
\omega^2&\omega^2&1 \end {array} \right).
\end{align*}
Then $H$ and $H^\prime$ are equivalent, since we can obtain $H^\prime$ from $H$ by interchanging columns $2$ and $3$ and then multiplying the resulting first column and third row by $\omega$ and the resulting second row by $\omega^2$. That is,
\begin{align*}
H' & = \left( \begin {array}{ccc} 
1&0&0\\ \noalign{\medskip}
0&\omega^2&0\\ \noalign{\medskip}
0&0&\omega \end {array} \right)
 \left( \begin {array}{ccc} 
1&0&0\\ \noalign{\medskip}
0&1&0 \\ \noalign{\medskip}
0&0&1 \end {array} \right)
H
\left( \begin {array}{ccc} 
1&0&0\\ \noalign{\medskip}
0&0&1\\ \noalign{\medskip}
0&1&0 \end {array} \right)
 \left( \begin {array}{ccc} 
\omega&0&0\\ \noalign{\medskip}
0&1&0 \\ \noalign{\medskip}
0&0&1 \end {array} \right) \\
& = DPHP'D'. 
\end{align*}
\end{ex}

We now describe the main construction of this paper. Let $H$ be an order~$d$ complex Hadamard matrix. Consider $H$ to represent $d$ vectors given by the rows of the matrix. Form $d$ sets of $d$ vectors $H_1(v), \ldots, H_d(v)$ from $H$, where $H_j(v)$ is the set of vectors formed by multiplying coordinate $j$ of each vector of $H$ by the constant~$v\in\C$, and let $H(v) = \cup_{j=1}^d H_j(v)$. 

The main result of this paper, given in Theorem~\ref{THM:Hadamard}, is that in dimensions $d=2, 3, 8$ we can construct a set of $d^2$ equiangular lines in $\C^d$ as $H(v)$ for some order $d$ complex Hadamard matrix $H$ and constant $v\in\C$, and furthermore that these are the only dimensions for which this is possible. 
We firstly give examples of the construction in each of these three dimensions. Although the examples in dimensions 2 and 3 coincide with examples previously found using another construction method \cite[Section~3.4]{Zauner}, we include them here to illustrate a new connection between the three dimensions via a common construction. 

\begin{ex}
Let $H$ be the following order $2$ complex Hadamard matrix:
\begin{equation*}
H= \left( \begin {array}{rr} 
1&i\\ \noalign{\medskip}
1&-i\end {array} \right).
\end{equation*}
Then $H(v)$ consists of the following $4$ vectors 
\begin{displaymath}
\begin{array}{@{\;(}cc@{)\;}}
v & i \\ [2pt]
v & -i \\[10pt]
1 & vi \\ [2pt]
1 & -vi
\end{array}
\end{displaymath}
which are equiangular in $\C^2$ for $v = \frac{1}{2}(1+\sqrt{3})(1+i)$. \\
\label{EX:C2}
\end{ex}

\begin{ex} Let $H$ be the following order $3$ complex Hadamard matrix: 
\begin{equation*}
H = \left(\begin{array}{ccc} 1 & 1 & 1 \\ 1 & \omega & \omega^2 \\ \omega & 1 & \omega^2 \end{array}\right)
\end{equation*}
Then $H(v)$ consists of the following $9$ vectors 
\begin{displaymath}
\begin{array}{@{\;(}ccc@{)\;}}
v & 1 & 1 \\ [2pt]
v & \omega & \omega^2 \\ [2pt]
v\omega & 1 & \omega^2 \\[10pt]
1 & v & 1 \\ [2pt]
1 & v\omega & \omega^2 \\ [2pt]
\omega & v  & \omega^2 \\[10pt]
1 & 1 & v \\ [2pt]
1 & \omega & v\omega^2 \\ [2pt]
\omega & 1  & v\omega^2
\end{array}
\end{displaymath}
which are equiangular in $\C^3$ for $v=-2$. 
\label{EX:C3}
\end{ex}

\begin{ex} Let $H$ be the following order $8$ Hadamard matrix:
\begin{equation*}
H= \left( \begin {array}{rrrrrrrr} 
1&1&1&1&1&1&1&1\\ \noalign{\medskip}
1&-1&1&-1&1&-1&1&-1\\ \noalign{\medskip}
1&1&-1&-1&1&1&-1&-1\\ \noalign{\medskip}
1&-1&-1&1&1&-1&-1&1\\ \noalign{\medskip}
1&1&1&1&-1&-1&-1&-1\\ \noalign{\medskip}
1&-1&1&-1&-1&1&-1&1\\ \noalign{\medskip}
1&1&-1&-1&-1&-1&1&1\\ \noalign{\medskip}
1&-1&-1&1&-1&1&1&-1\end {array} \right).
\end{equation*}
Then $H(v)$ consists of the following $64$ vectors:
\begin{displaymath} 
\begin{array}{ @{\;(} crrrrrrr @{)\; \hspace{15pt}(\;} crrrrrrr@{)}}
v&1&1&1&1&1&1&1	&	1&v&1&1&1&1&1&1\\ 
v&-1&1&-1&1&-1&1&-1& 	1&-v&1&-1&1&-1&1&-1\\ 
v&1&-1&-1&1&1&-1&-1& 	1&v&-1&-1&1&1&-1&-1\\ 
v&-1&-1&1&1&-1&-1&1& 	1&-v&-1&1&1&-1&-1&1\\ 
v&1&1&1&-1&-1&-1&-1& 	1&v&1&1&-1&-1&-1&-1\\ 
v&-1&1&-1&-1&1&-1&1& 	1&-v&1&-1&-1&1&-1&1\\ 
v&1&-1&-1&-1&-1&1&1& 	1&v&-1&-1&-1&-1&1&1\\ 	
v&-1&-1&1&-1&1&1&-1&	1&-v&-1&1&-1&1&1&-1\\[10pt]
1&1&v&1&1&1&1&1	&	1&1&1&v&1&1&1&1\\ 
1&-1&v&-1&1&-1&1&-1& 	1&-1&1&-v&1&-1&1&-1\\ 
1&1&-v&-1&1&1&-1&-1& 	1&1&-1&-v&1&1&-1&-1\\ 
1&-1&-v&1&1&-1&-1&1& 	1&-1&-1&v&1&-1&-1&1\\ 
1&1&v&1&-1&-1&-1&-1& 	1&1&1&v&-1&-1&-1&-1\\ 
1&-1&v&-1&-1&1&-1&1& 	1&-1&1&-v&-1&1&-1&1\\ 
1&1&-v&-1&-1&-1&1&1& 	1&1&-1&-v&-1&-1&1&1\\ 	
1&-1&-v&1&-1&1&1&-1&	1&-1&-1&v&-1&1&1&-1\\[10pt]
1&1&1&1&v&1&1&1	&	1&1&1&1&1&v&1&1\\ 
1&-1&1&-1&v&-1&1&-1& 	1&-1&1&-1&1&-v&1&-1\\ 
1&1&-1&-1&v&1&-1&-1& 	1&1&-1&-1&1&v&-1&-1\\ 
1&-1&-1&1&v&-1&-1&1& 	1&-1&-1&1&1&-v&-1&1\\ 
1&1&1&1&-v&-1&-1&-1& 	1&1&1&1&-1&-v&-1&-1\\ 
1&-1&1&-1&-v&1&-1&1& 	1&-1&1&-1&-1&v&-1&1\\ 
1&1&-1&-1&-v&-1&1&1& 	1&1&-1&-1&-1&-v&1&1\\ 	
1&-1&-1&1&-v&1&1&-1&	1&-1&-1&1&-1&v&1&-1\\[10pt]
1&1&1&1&1&1&v&1	&	1&1&1&1&1&1&1&v\\ 
1&-1&1&-1&1&-1&v&-1& 	1&-1&1&-1&1&-1&1&-v\\ 
1&1&-1&-1&1&1&-v&-1& 	1&1&-1&-1&1&1&-1&-v\\ 
1&-1&-1&1&1&-1&-v&1& 	1&-1&-1&1&1&-1&-1&v\\ 
1&1&1&1&-1&-1&-v&-1& 	1&1&1&1&-1&-1&-1&-v\\ 
1&-1&1&-1&-1&1&-v&1& 	1&-1&1&-1&-1&1&-1&v\\ 
1&1&-1&-1&-1&-1&v&1& 	1&1&-1&-1&-1&-1&1&v\\ 	
1&-1&-1&1&-1&1&v&-1&	1&-1&-1&1&-1&1&1&-v\\[10pt]
\end{array} \end{displaymath}
which are equiangular in $\C^8$ for $v=-1+2i$. 
\label{EX:C8}
\end{ex}

It is easily verified by hand that each of the sets of vectors in Examples~\ref{EX:C2}, \ref{EX:C3} and \ref{EX:C8} comprises a set of $d^2$ equiangular lines in their respective dimensions. 

\section{Allowable construction parameters} \label{S:prove}

The main theorem of this paper is the following, in which we characterize all dimensions $d$, order $d$ complex Hadamard matrices $H$ and constants $v\in\C$ for which one can construct $d^2$ equiangular lines as $H(v)$. 

\begin{thm} Let $d\geq 2$. Let $H$ be an order $d$ complex Hadamard matrix and $v\in\C$ be a constant. Then $H(v)$ is a set of $d^2$ equiangular lines if and only if one of the following holds:
\begin{enumerate}
\item{$d=2$ and $v\in\left\{\frac{1}{2}(1\pm\sqrt{3})(1+i), \frac{1}{2}(1\pm\sqrt{3})(1-i),-\frac{1}{2}(1\pm\sqrt{3})(1+i)\right.$, \\ $\left.-\frac{1}{2}(1\pm\sqrt{3})(1-i)\right\},$ \label{THM:case2}}
\item{$d=3$ and $v\in\{0,-2,1\pm\sqrt{3}i\},$\label{THM:case3}}
\item{$d=8$ and $H$ is equivalent to a real Hadamard matrix and $v\in\{-1\pm2i\}$\label{THM:case8}}.
\end{enumerate}
\label{THM:Hadamard}
\end{thm}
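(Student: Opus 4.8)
The plan is to set up explicit inner product conditions and reduce everything to a finite system of polynomial equations in $v$ and the entries of $H$. Since all $d^2$ vectors in $H(v)$ have the same norm (namely $\sqrt{d-1+|v|^2}$, because each vector has $d-1$ entries of magnitude $1$ and one entry of magnitude $|v|$), the equiangularity condition is simply that $|\langle \x,\y\rangle|$ takes a common value $a$ over all distinct pairs. I would first classify the pairs of vectors according to which coordinate of $H$ is scaled by $v$ in each. Two vectors can come from the same set $H_j(v)$ (their $v$ sits in the same coordinate $j$) or from different sets $H_j(v), H_k(v)$ with $j\neq k$. Within these two broad cases, the inner products split further depending on which rows of $H$ the two vectors are built from (the same row or distinct rows). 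The key computational input is that the original rows of $H$ are orthogonal and each has squared norm $d$, so inner products of unscaled rows are either $d$ (same row) or $0$ (distinct rows); introducing the factor $v$ in one or two coordinates perturbs these values by terms involving $v$, $\bar v$, and the relevant entries $H_{j,\ell}$.

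Carrying this out, I expect to obtain a short list of distinct inner-product magnitudes, each expressed in terms of $|v|$, $\operatorname{Re}(v\,\overline{H_{r,j}H_{s,j}})$-type quantities, and the matrix entries. For the same-row case the inner product will depend only on $|v|$ and $d$, giving expressions like $|d-1+|v|^2|$ (two vectors sharing scaled coordinate and row, i.e.\ identical up to nothing — actually distinct vectors), $|d-2+2\operatorname{Re}(v)|$ or similar when the scaled coordinate differs, and so on. Setting all these magnitudes equal to the single constant $a$ yields a system of equations. The standard normalization $|a|^2 = \tfrac{d^2-1}{d+1}\cdot(\text{norm})^2$ coming from the absolute bound (a maximum-sized set in $\C^d$ must meet the Welch/Gerzon bound, forcing $a^2 = \frac{(d-1+|v|^2)^2}{d+1}$) provides one master equation tying $a$ to $|v|$ and $d$. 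I would use this to eliminate $a$ and reduce to relations purely among $d$, $v$, and entries of $H$.

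The crux, and the main obstacle, is the forward direction: showing that these equations are \emph{only} satisfiable for $d\in\{2,3,8\}$ and pinning down the stated values of $v$ (and, for $d=8$, forcing $H$ to be equivalent to a real Hadamard matrix). I anticipate that equating the same-row and different-row inner-product magnitudes produces an equation of the form $f(d,|v|^2)=0$ together with constraints like $\operatorname{Re}(v)=$ (something) and $|v|^2=$ (something), which are simultaneously solvable only for special $d$. The cleanest route is probably to derive, from the cross-row cross-set inner products, an identity forcing each entry $H_{j,k}$ (suitably normalized) to satisfy $|1 - v\,\overline{v}\cdots|$-type equalities that collapse to a real-entry condition; combined with a counting/degree argument on the resulting univariate polynomial in $|v|^2$, this should leave only finitely many admissible $d$, which one checks to be exactly $2,3,8$. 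For the reverse direction I would simply verify that the listed values of $v$ satisfy the derived system in each dimension — for $d=2,3$ directly, and for $d=8$ after reducing the entries of $H$ to $\pm 1$ via the equivalence — appealing to the already-stated fact that Examples~\ref{EX:C2}, \ref{EX:C3} and \ref{EX:C8} give equiangular sets, then extending to the full symmetric list of $v$-values by noting the symmetries $v\mapsto \bar v$ and $v\mapsto -v$ (and, for $d=3$, the Galois/scaling symmetries) that preserve equiangularity. The hardest single step will be ruling out all $d\notin\{2,3,8\}$: I expect this to hinge on showing that the master polynomial relating $|v|^2$ to $d$ has a root yielding a genuine complex Hadamard configuration only in those three cases, and making that argument fully rigorous rather than heuristic is where the real work lies.
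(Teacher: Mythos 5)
Your setup matches the paper's: the same classification of pairs by shared/distinct scaled coordinate and shared/distinct row, the observation that the ``same coordinate'' and ``same row'' cases depend only on $|v|$ and $d$ (the paper's Lemma~\ref{LEM:Hadamardconstant}, giving magnitudes $|a^2+b^2-1|$ and $|2a+d-2|$ for $v=a+ib$), and direct verification in dimensions $2$ and $3$ (where the paper additionally invokes Haagerup's result that there is a \emph{unique} complex Hadamard matrix up to equivalence, so the type-(iii) inner products can be listed explicitly). But the heart of the theorem --- ruling out every $d>3$ except $d=8$ with $H$ real-equivalent --- is exactly the part you leave as ``where the real work lies,'' and the route you sketch (a master polynomial in $|v|^2$ and $d$ plus a degree/counting argument) does not close it. One concrete obstruction: for $d\ge 4$ complex Hadamard matrices form continuous families, so ``the entries of $H$'' are not finitely many discrete unknowns, and there is no single univariate polynomial whose roots you can enumerate; the constraint system genuinely depends on the (unclassified) structure of $H$. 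The Welch-bound normalization $a^2=(d-1+|v|^2)^2/(d+1)$ is true but is only one scalar equation and cannot by itself force $d\in\{2,3,8\}$ or force $H$ to be real.

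The missing idea in the paper is a dichotomy on the \emph{summands} of the inner product of each pair of distinct rows of $H$ (the $d$ magnitude-one terms whose sum is $0$). Either (Case~1) for every row pair these summands take values in some two-element set $\{\xi,-\xi\}$ --- in which case a short normalization shows $H$ is equivalent to a real Hadamard matrix, the type-(iii) magnitudes collapse to just $|2a-2|$ and $|2b|$, and the system $|a^2+b^2-1|=|2a+d-2|=|2a-2|=|2b|$ is solvable only for $d=8$, $v=-1\pm 2i$; or (Case~2) some row pair has summands with at least three distinct values $\xi_1,\xi_2,\xi_3$, which produces three type-(iii) inner products of the form $(v-1)\eta_1+(\overline{v}-1)\xi_j$. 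These are three \emph{distinct} points on a circle of positive radius $|\overline{v}-1|$ centred at the nonzero point $(v-1)\eta_1$; for equiangularity they would also have to lie on a common circle centred at the origin, but two distinct circles meet in at most two points --- contradiction. This elementary geometric step is what eliminates all other dimensions and all non-real-equivalent $H$ in one stroke, and nothing in your proposal substitutes for it.
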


Notice that if $H(v)$ is a set of equiangular lines then, for any complex Hadamard matrix $H'$ that is equivalent to $H$, the set $H'(v)$ is also a set of equiangular lines. This is because permutation of the rows of $H$ does not change the set $H(v)$; permutation of the columns of $H$ applies a fixed permutation to the coordinates of all vectors of $H(v)$; multiplication of a row of $H$ by a constant $c \in \C$ of magnitude 1 multiplies one vector in each set $H_j(v)$ by~$c$; and multiplication of a column of $H$ by a constant $c \in \C$ of magnitude 1 multiplies a fixed coordinate of each vector of $H(v)$ by~$c$. In each case, the magnitude of the inner product between pairs of distinct vectors in $H(v)$ is unchanged.

There are only three types of inner product that can arise between distinct vectors of $H(v)$:
\begin{enumerate}[(i)]
\item{the inner product of two distinct vectors within a set $H_j(v)$, \label{EQ:inner1block}}
\item{the inner product of two vectors of distinct sets $H_j(v), H_k(v)$ which are derived from the same row of $H$, \label{EQ:inner1row}}
\item{the inner product of two vectors of distinct sets $H_j(v), H_k(v)$ which are derived from distinct rows of $H$. \label{EQ:inner2blocks2rows}}
\end{enumerate}
Therefore $H(v)$ is a set of $d^2$ equiangular lines if and only if the equations obtained by equating the magnitudes of every inner product of type (\ref{EQ:inner1block}), (\ref{EQ:inner1row}) and (\ref{EQ:inner2blocks2rows}) have a solution. In Lemma~\ref{LEM:Hadamardconstant} we show that only one magnitude occurs for all inner products of type (i) and likewise for all inner products of type~(ii). In Propositions~\ref{PROP:C2}--\ref{PROP:C3} and Theorem~\ref{THM:real} we show that, for dimensions 2, 3 and 8, inner products of type (iii) take values in only a small set. It is then straightforward to characterize the solutions obtained by equating magnitudes, for these three dimensions, and to show that the corresponding equations have no solutions in other dimensions. This establishes Theorem~\ref{THM:Hadamard}.

\begin{lemma} 
Let $v=a+ib$ for $a,b\in\R$. For all $d$, every inner product of type (\ref{EQ:inner1block}) has magnitude $|a^2+b^2-1|$ and every inner product of type (\ref{EQ:inner1row}) has magnitude  $|2a+d-2|$.
\label{LEM:Hadamardconstant}
\end{lemma}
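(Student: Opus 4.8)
The plan is to compute each inner product directly, exploiting two structural facts: the pairwise orthogonality of the rows of $H$ (which is exactly the condition $HH^{\dagger}=dI_d$) for type (\ref{EQ:inner1block}), and the fact that every entry of $H$ has magnitude $1$ for type (\ref{EQ:inner1row}). I would write the rows of $H$ as $h_1,\dots,h_d$, and for each row $h_r=(h_{r1},\dots,h_{rd})$ let $h_r^{(j)}$ denote the vector in $H_j(v)$ obtained from $h_r$, so that $h_r^{(j)}$ agrees with $h_r$ in every coordinate except the $j$th, where its entry is $v\,h_{rj}$.

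For an inner product of type (\ref{EQ:inner1block}), I would take two vectors $h_r^{(j)}$ and $h_{r'}^{(j)}$ in the same set $H_j(v)$, with $r\neq r'$. Splitting the Hermitian inner product into the $j$th coordinate and the remaining $d-1$ coordinates gives
\[
\langle h_r^{(j)}, h_{r'}^{(j)}\rangle = |v|^2\, h_{rj}\overline{h_{r'j}} + \sum_{s\neq j} h_{rs}\overline{h_{r's}}.
\]
Since the rows $h_r$ and $h_{r'}$ are orthogonal, the full sum $\sum_s h_{rs}\overline{h_{r's}}$ vanishes, so the trailing sum equals $-h_{rj}\overline{h_{r'j}}$. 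The inner product therefore collapses to $(|v|^2-1)\,h_{rj}\overline{h_{r'j}}$, whose magnitude is $\bigl||v|^2-1\bigr|=|a^2+b^2-1|$ because $|h_{rj}|=|h_{r'j}|=1$ and $|v|^2=a^2+b^2$.

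For an inner product of type (\ref{EQ:inner1row}), I would take two vectors $h_r^{(j)}$ and $h_r^{(k)}$ arising from the same row $h_r$ with $j\neq k$. Here the $j$th coordinate of $h_r^{(j)}$ carries the factor $v$ while the $k$th coordinate of $h_r^{(k)}$ carries the factor $v$ (which is conjugated in the Hermitian product), and the remaining $d-2$ coordinates coincide. Using $|h_{rs}|=1$ for every $s$, the inner product evaluates to
\[
v\,|h_{rj}|^2 + \overline{v}\,|h_{rk}|^2 + \sum_{s\neq j,k} |h_{rs}|^2 = v + \overline{v} + (d-2) = 2a + (d-2),
\]
so its magnitude is $|2a+d-2|$, as claimed.

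Neither computation presents a genuine obstacle; the only points demanding care are correctly tracking which entry is conjugated in the Hermitian inner product (so that the type (\ref{EQ:inner1row}) contribution is $v+\overline{v}$ rather than $2v$) and recognizing that the type (\ref{EQ:inner1block}) simplification is precisely an application of row-orthogonality. The real content of the lemma is that both magnitudes are independent of the particular choice of rows and coordinates, which is what allows types (\ref{EQ:inner1block}) and (\ref{EQ:inner1row}) each to contribute a single equation to the equiangularity conditions, leaving only type (\ref{EQ:inner2blocks2rows}) to be analyzed dimension by dimension.
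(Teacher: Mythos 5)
Your proof is correct and follows essentially the same approach as the paper's: for type (\ref{EQ:inner1block}) you use row-orthogonality of $H$ to reduce the inner product to $(|v|^2-1)h_{rj}\overline{h_{r'j}}$, and for type (\ref{EQ:inner1row}) you use that all entries have magnitude $1$ to get $v+\overline{v}+(d-2)$. The paper states these two computations more tersely, but the underlying argument is identical.
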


\begin{proof} Two vectors having inner product of type (\ref{EQ:inner1block}) are derived from distinct rows of $H$; thus their inner product in $H$ is 0. 
In $H(a+ib)$ these vectors are multiplied in the same coordinate by $a+ib$, giving an inner product of magnitude $|a^2+b^2-1|$.

Two vectors having inner product of type (\ref{EQ:inner1row}) are derived from the same row of $H$; thus their inner product in $H$ is $d$ (given by a contribution of 1 from each coordinate of the vectors).
In $H(a+ib)$ these vectors are multiplied in different coordinates by $a+ib$, giving an inner product of magnitude $|2a+d-2|$.
\end{proof}

\begin{prop}[$d=2$]
Let $H$ be an order $2$ complex Hadamard matrix. Then $H(v)$ is a set of $4$ equiangular lines in $\C^2$ if and only if $v\in\left\{\frac{1}{2}(1\pm\sqrt{3})(1+i)\right.$, $\left.\frac{1}{2}(1\pm\sqrt{3})(1-i), -\frac{1}{2}(1\pm\sqrt{3})(1+i),-\frac{1}{2}(1\pm\sqrt{3})(1-i)\right\}$.  
\label{PROP:C2}
\end{prop}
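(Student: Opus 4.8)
The plan is first to eliminate the dependence on the particular matrix $H$. I would observe that every order~$2$ complex Hadamard matrix is equivalent to the real Hadamard matrix $\left(\begin{smallmatrix}1&1\\1&-1\end{smallmatrix}\right)$: rescaling the two rows and then the second column by suitable unimodular constants dephases $H$ into the form $\left(\begin{smallmatrix}1&1\\1&z\end{smallmatrix}\right)$, and orthogonality of the rows forces $1+\bar z=0$, i.e.\ $z=-1$. Since the remark following Theorem~\ref{THM:Hadamard} guarantees that equivalence of $H$ preserves the property that $H(v)$ is equiangular, it then suffices to treat this single matrix.

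Next I would write out $H(v)$ explicitly for $H=\left(\begin{smallmatrix}1&1\\1&-1\end{smallmatrix}\right)$, obtaining the four vectors $(v,1),(v,-1)$ (the set $H_1(v)$) and $(1,v),(1,-v)$ (the set $H_2(v)$). Writing $v=a+ib$, Lemma~\ref{LEM:Hadamardconstant} with $d=2$ immediately gives that every inner product of type~(\ref{EQ:inner1block}) has magnitude $|a^2+b^2-1|$ and every inner product of type~(\ref{EQ:inner1row}) has magnitude $|2a|$, so the only thing left to compute is the two inner products of type~(\ref{EQ:inner2blocks2rows}). These are $\langle(v,1),(1,-v)\rangle$ and $\langle(v,-1),(1,v)\rangle$, and I expect each to equal $v-\bar v=2ib$, hence to have magnitude $|2b|$. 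At that point $H(v)$ will be a set of $4$ equiangular lines precisely when $|a^2+b^2-1|=|2a|=|2b|$.

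Finally I would solve this system. The condition $|2a|=|2b|$ gives $b=\pm a$, so $a^2+b^2=2a^2$ and everything reduces to the single equation $|2a^2-1|=2|a|$. A short case split on the sign of $2a^2-1$ should yield the two nonnegative values $|a|=\tfrac12(1+\sqrt3)$ and $|a|=\tfrac12(\sqrt3-1)$; combining these with the independent sign choices for $a$ and for $b=\pm a$ should reproduce exactly the eight values of $v$ in the statement. I expect the only genuine work to be the evaluation of the type~(\ref{EQ:inner2blocks2rows}) magnitude and the sign analysis for $|2a^2-1|=2|a|$; the reduction by equivalence is what keeps the argument short and independent of the chosen Hadamard matrix, and it is the step I would be most careful to justify, namely that the $2\times 2$ complex Hadamard matrices really do form a single equivalence class.
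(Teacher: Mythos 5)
Your proposal is correct and follows essentially the same route as the paper: reduce to the real Hadamard matrix $\left(\begin{smallmatrix}1&1\\1&-1\end{smallmatrix}\right)$ via equivalence, note that both type~(\ref{EQ:inner2blocks2rows}) inner products equal $v-\bar v$ and hence have magnitude $|2b|$, and combine with Lemma~\ref{LEM:Hadamardconstant} to solve $|a^2+b^2-1|=|2a|=|2b|$. The only difference is that you dephase $H$ directly to establish the single equivalence class, where the paper simply cites Haagerup; your sign analysis correctly recovers all eight values of $v$.
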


\begin{proof} 
Up to equivalence, the only order 2 complex Hadamard matrix \cite[Prop.~2.1]{Haagerup} is
$$H = \left(\begin{array}{cc} 1  & 1 \\ 1 & -1 \end{array}\right).$$ 
Both inner products of type (\ref{EQ:inner2blocks2rows}) that occur in $H(a+ib)$ (where $a, b \in \R$) have magnitude $|(a+ib)-\overline{(a+ib)}| = |2b|$. 

Using Lemma~\ref{LEM:Hadamardconstant}, $H(a+ib)$ is therefore a set of equiangular lines if and only if we can solve the equations
$$|a^2+b^2-1| = |2a| = |2b| \quad \mbox{for $a, b \in \R$}.$$
This can be done exactly when $a\in\left\{\frac{1}{2}(1\pm\sqrt{3}), -\frac{1}{2}(1\pm\sqrt{3})\right\}$ and $b = \pm a$. 
\end{proof}

\begin{prop}[$d=3$]
Let $H$ be an order $3$ complex Hadamard matrix.  Then $H(v)$ is a set of $9$ equiangular lines in $\C^3$ if and only if $v\in\{0,-2,1\pm\sqrt{3}i\}$.
\label{PROP:C3}
\end{prop}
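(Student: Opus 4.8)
The plan is to follow exactly the template established by the $d=2$ case (Proposition~\ref{PROP:C2}), since the structural setup is identical. By Lemma~\ref{LEM:Hadamardconstant} the inner products of type~(i) and type~(ii) already have fixed magnitudes $|a^2+b^2-1|$ and $|2a+1|$ (taking $d=3$), so everything hinges on pinning down the type~(iii) inner products and then solving the resulting system of magnitude equations for $a,b\in\R$. First I would reduce to a canonical form: up to equivalence there is essentially one order~$3$ complex Hadamard matrix (the Fourier matrix $F_3$ with entries $\omega^{jk}$), and by the remark following Theorem~\ref{THM:Hadamard} replacing $H$ by an equivalent matrix does not change whether $H(v)$ is equiangular. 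So I would fix $H = F_3$ and compute explicitly.

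The computational heart is to evaluate all type~(iii) inner products. These come from pairing a vector in $H_j(v)$ with a vector in $H_k(v)$ ($j\neq k$) that are derived from two distinct rows of $H$. For $F_3$, the inner product of two distinct rows is $0$; the modification replaces one entry of each vector by a $v$-multiple in coordinates $j$ and $k$ respectively. A single $v$-substitution in coordinate $j$ changes the $j$-th term of the inner product from $\omega^{(r-s)j}$ (some cube root of unity) to $v\,\omega^{(r-s)j}$, and similarly in coordinate $k$. Collecting terms, each type~(iii) inner product has the form $(v-1)\zeta_1 + (\overline v - 1)\zeta_2$ for cube roots of unity $\zeta_1,\zeta_2$, and its magnitude depends only on $\zeta_1\overline{\zeta_2}\in\{1,\omega,\omega^2\}$. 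Thus I expect only \emph{two} distinct type~(iii) magnitudes to arise: one when $\zeta_1=\zeta_2$, giving $|(v-1)+(\overline v-1)| = |2a-2|$, and one when $\zeta_1\neq\zeta_2$, giving $|(v-1)+(\overline v-1)\omega^{\pm 1}|$, which simplifies to an expression in $a,b$. The main obstacle is organizing this enumeration cleanly so that it is transparent exactly which pairs $(\zeta_1,\zeta_2)$ occur and confirming that both magnitude types genuinely appear among the $\binom{9}{2}$ pairs.

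With the type~(iii) magnitudes identified, the proof concludes by assembling the full system: $H(v)$ is equiangular if and only if all of $|a^2+b^2-1|$, $|2a+1|$, and the type~(iii) magnitudes are equal. I would set these equal pairwise and solve for $a,b\in\R$. The equation $|2a+1| = |2a-2|$ forces $a = \tfrac14$ from one branch, while the non-real type~(iii) magnitude supplies a second relation in $a$ and $b$; combining with $|a^2+b^2-1|=|2a+1|$ yields a small polynomial system. I expect the solution set to be precisely $v = a+ib \in \{0,\,-2,\,1\pm\sqrt{3}i\}$, matching the four values claimed (note $v=0$ gives $a=b=0$, $v=-2$ gives $a=-2,b=0$, and $1\pm\sqrt3 i$ gives $a=1,b=\pm\sqrt3$). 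Solving the polynomial system and verifying that no spurious real solutions are introduced by squaring out the absolute values is the one place where care is needed, but it is routine algebra rather than a conceptual difficulty.
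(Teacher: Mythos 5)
Your overall strategy---reduce to the Fourier matrix $F_3$, write each type~(iii) inner product as $(v-1)\zeta_1+(\overline{v}-1)\zeta_2$ with $\zeta_1,\zeta_2$ cube roots of unity, note that the magnitude depends only on $\rho=\zeta_1\overline{\zeta_2}$, and then solve the resulting magnitude equations---is exactly the paper's, and the reduction and the form of the inner product are correct. But your enumeration of which values of $\rho$ occur is wrong, and this breaks the final system. For $F_3$ (entries $\omega^{rm}$, indices mod $3$) a type~(iii) pair uses distinct rows $r\neq s$ and distinct marked coordinates $j\neq k$, so $\zeta_1=\omega^{(r-s)j}$, $\zeta_2=\omega^{(r-s)k}$ and $\rho=\omega^{(r-s)(j-k)}$; since $3$ is prime and neither factor vanishes mod $3$, we get $\rho\in\{\omega,\omega^2\}$ and the case $\zeta_1=\zeta_2$ \emph{never} occurs. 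Hence the magnitude $|2a-2|$ does not arise among type~(iii) inner products, and the equation $|2a+1|=|2a-2|$ (which forces $a=\tfrac14$) is spurious: none of the claimed solutions $0,-2,1\pm\sqrt{3}i$ has $a=\tfrac14$, so the system you propose is inconsistent and, carried out as written, your argument would yield the empty set rather than the stated answer.

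There is a second, related slip: $\rho=\omega$ and $\rho=\omega^2$ do \emph{not} give a single common magnitude. One computes
\begin{equation*}
\bigl|(v-1)+(\overline{v}-1)\omega\bigr|=|a-1+b\sqrt{3}|, \qquad \bigl|(v-1)+(\overline{v}-1)\omega^2\bigr|=|a-1-b\sqrt{3}|,
\end{equation*}
which differ by the sign of $b$, and both genuinely occur among the $18$ type~(iii) pairs. The correct system is therefore $|a^2+b^2-1|=|2a+1|=|a-1+b\sqrt{3}|=|a-1-b\sqrt{3}|$, whose solution set is exactly $(a,b)\in\{(0,0),(-2,0),(1,\pm\sqrt{3})\}$, as in the paper. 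So the fix is confined to the enumeration step: replace your list $\{1,\omega^{\pm 1}\}$ of possible values of $\rho$ by $\{\omega,\omega^2\}$, and keep the two resulting magnitudes as separate constraints rather than merging them into one.
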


\begin{proof}
Let $\omega=e^{2\pi i/3}$. Up to equivalence, the only order 3 complex Hadamard matrix \cite[Prop.~2.1]{Haagerup} is
$$H = \left(\begin{array}{ccc} 1  & 1 & 1 \\ 1 & \omega & \omega^2 \\ 1 & \omega^2 & \omega \end{array}\right).$$
All inner products of type (\ref{EQ:inner2blocks2rows}) that occur in $H(v)$ are derived from rows of $H$ having inner product $1+\omega+\omega^2=0$. In $H(v)$, each of these inner products takes the form $\omega^j(v+\overline{v}\omega+\omega^2)$ or $\omega^j(v+\omega+\overline{v}\omega^2)$ for some $j\in\{0,1,2\}$. For $v = a+ib$ with $a, b \in \R$, these inner products have magnitude $|a-1+b\sqrt{3}|$ and $|a-1-b\sqrt{3}|$, respectively, and both magnitudes occur.

Using Lemma~\ref{LEM:Hadamardconstant}, $H(a+ib)$ is therefore a set of equiangular lines if and only if we can solve the equations
$$|a^2+b^2-1| = |2a+1| = |a-1+b\sqrt{3}|=|a-1-b\sqrt{3}| \quad \mbox{for $a, b \in \R$}.$$
This can be done exactly when $(a,b)\in\{(0,0),(-2,0)$, $(1,\pm\sqrt{3})\}$. 
\end{proof}

We now complete the proof of our main result, by showing that if $H(v)$ is a set of $d^2$ equiangular lines for $d>3$ then we must be in case (\ref{THM:case8}) of Theorem~\ref{THM:Hadamard}.

\begin{thm} Let $d>3$ and let $H$ be an order $d$ complex Hadamard matrix. Then $H(v)$ is a set of $d^2$ equiangular lines if and only if $d=8$ and $H$ is equivalent to a real Hadamard matrix and $v\in\{-1\pm2i\}$.
\label{THM:real}
\end{thm}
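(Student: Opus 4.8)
The plan is to reduce equiangularity to a single condition on the ``cross-ratios'' of the $2\times2$ submatrices of $H$, and then to read off the structure of $H$, the value of $v$, and the dimension $d$ from that condition together with Lemma~\ref{LEM:Hadamardconstant}.

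First I would compute a general inner product of type~(\ref{EQ:inner2blocks2rows}) explicitly. Writing the entries of $H$ as $h_{r\ell}$ (each of magnitude~$1$), take the vectors arising from rows $r\ne s$ in the sets $H_j(v)$ and $H_k(v)$ with $j\ne k$. Because distinct rows of $H$ are orthogonal, $\sum_\ell h_{r\ell}\overline{h_{s\ell}}=0$, so the coordinates other than $j$ and $k$ can be eliminated and the inner product collapses to $(v-1)\alpha+(\overline v-1)\beta$, where $\alpha=h_{rj}\overline{h_{sj}}$ and $\beta=h_{rk}\overline{h_{sk}}$ have magnitude~$1$. Setting $\gamma=\alpha\overline\beta=h_{rj}\overline{h_{sj}}\,\overline{h_{rk}}h_{sk}$ and expanding, the squared magnitude equals $2|v-1|^2+2\operatorname{Re}\!\big((v-1)^2\gamma\big)$, with $\gamma$ a unit-modulus quantity determined by the submatrix of $H$ in rows $r,s$ and columns $j,k$.

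Next I would use that equiangularity forces every type~(\ref{EQ:inner2blocks2rows}) inner product to share one magnitude, so that $\operatorname{Re}\!\big((v-1)^2\gamma\big)$ is constant as $\gamma$ ranges over all values realized by some $r\ne s$, $j\ne k$. The case $v=1$ is dismissed at once, since by Lemma~\ref{LEM:Hadamardconstant} the type~(\ref{EQ:inner1block}) magnitude would vanish while the type~(\ref{EQ:inner1row}) magnitude is $d\ne0$. For $v\ne1$ the map $\gamma\mapsto(v-1)^2\gamma$ carries the unit circle bijectively onto a circle of positive radius, and a vertical line meets that circle in at most two points; hence at most two distinct values of $\gamma$ occur. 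This is the crux of the argument, and the hard part is identifying those two values. The realizable $\gamma$ form a set $S$ closed under $\gamma\mapsto\gamma^{-1}$ (interchanging $j$ and $k$). Fixing one pair of rows, the $d$ phases $\phi_\ell=h_{r\ell}\overline{h_{s\ell}}$ have all pairwise ratios $\phi_j/\phi_k$ in $S$. If $1\notin S$ then these phases are distinct yet each lies in the set $\{\phi_1\}\cup\{\zeta\phi_1:\zeta\in S\}$ of size at most~$3$, impossible for $d>3$; thus $1\in S$, and since $S$ cannot be $\{1\}$ (orthogonality would fail) it follows that $S=\{1,-1\}$.

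Finally I would convert this into the stated classification. As $\phi_j/\phi_k\in\{1,-1\}$ for every pair of rows, each row-to-row ratio of same-column entries takes two opposite values; normalizing the first row of $H$ to all ones by column scalings and then rescaling each remaining row by a unit scalar exhibits $H$ as equivalent to a real Hadamard matrix. Since each row pair splits its phases evenly in sign, both $\gamma=1$ and $\gamma=-1$ are realized, so equating the two resulting type~(\ref{EQ:inner2blocks2rows}) magnitudes gives $\operatorname{Re}\!\big((v-1)^2\big)=0$, i.e.\ $b^2=(a-1)^2$ for $v=a+ib$; the common type~(\ref{EQ:inner2blocks2rows}) magnitude is then $2|a-1|$. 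Feeding this into the equalities $|a^2+b^2-1|=|2a+d-2|=2|a-1|$ from Lemma~\ref{LEM:Hadamardconstant} forces $a=-1$, $b=\pm2$, and $|d-4|=4$, whence $d=8$ and $v\in\{-1\pm2i\}$. For the converse I would simply verify, as in Example~\ref{EX:C8}, that a real order-$8$ Hadamard matrix with $v=-1\pm2i$ makes all three inner-product magnitudes equal to~$4$, completing the equivalence.
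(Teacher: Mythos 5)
Your proof is correct and follows essentially the same route as the paper's: both reduce a type~(iii) inner product to $(v-1)\alpha+(\overline{v}-1)\beta$ with $\alpha,\beta$ unimodular, both use the circle-intersection fact to show at most two cross-ratios $\gamma=\alpha\overline{\beta}$ can occur, both deduce equivalence to a real Hadamard matrix by the same row/column normalization, and both solve the same system of magnitude equations to force $d=8$ and $v=-1\pm 2i$. The only difference is organizational --- the paper splits into two explicit cases according to whether some row pair has three distinct summands, whereas you first bound the global ratio set $S$ and then pin it down to $\{1,-1\}$ via pigeonhole and closure under inversion --- which is a valid repackaging of the same argument.
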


\begin{proof}Let $H = (h_{jk})$ be an order $d$ complex Hadamard matrix. We consider two cases. 

Case 1 is where, for every pair of distinct rows of $H$, all summands of the inner product of the rows take values in a set $\{\xi, -\xi\}$ for some $\xi \in \C$ (depending on the row pair) of magnitude~1.
We now show that, in this case, $H$ is equivalent to some real Hadamard matrix $H'$. 
Firstly transform each entry of the first row of $H$ to be 1, by multiplying each column~$k$ of $H$ by the constant $h_{1k}^{-1}\in\C$ of magnitude 1.
Then all summands of the inner product of the resulting rows~1 and~$j$ take values in a set $\{\xi_j, -\xi_j\}$ for some $\xi_j \in \C$ of magnitude~1, and so all entries of row $j$ lie in $\{\xi_j, -\xi_j\}$. Multiply each row $j$ by the constant $\xi_j^{-1} \in \C$ of magnitude~1 to obtain a real Hadamard matrix~$H'$.

We next show that all inner products of type~(\ref{EQ:inner2blocks2rows}) that occur in $H'(v)$ have one of exactly two magnitudes.
All such inner products are derived from rows of $H'$ having inner product $\frac{d}{2}(1) + \frac{d}{2}(-1)$, and $\frac{d}{2} \ge 2$ since $d > 3$. In $H'(v)$, each of these inner products takes the form $(-1)^j \left(v+\overline{v} + (\frac{d}{2}-2)(1)+\frac{d}{2}(-1)\right) = (-1)^j (v + \overline{v} -2)$ or $(-1)^j \left(v-\overline{v} + (\frac{d}{2}-1)(1)+(\frac{d}{2}-1)(-1)\right) = (-1)^j (v-\overline{v})$ for some $j \in \{0,1\}$. For $v = a+ib$ with $a,b\in\R$, these inner products have magnitude $|2a-2|$ and $|2b|$, respectively, and both magnitudes occur.

Using Lemma~\ref{LEM:Hadamardconstant}, $H(a+ib)$ is therefore a set of equiangular lines if and only if we can solve the equations
$$|a^2+b^2-1| = |2a+d-2| = |2a-2| = |2b| \quad \mbox{for $a,b\in\R$}.$$
This can be done exactly when $d=8$ and $(a,b)\in\{(-1,\pm2)\}$. 

Case 2 is where the summands of the inner product of some pair of distinct rows of $H$ are $\xi_1$, $\xi_2$, $\xi_3$ for distinct $\xi_j \in \C$ of magnitude 1, together with $\eta_1, \dots, \eta_{d-3}$ for $d-3 > 0$ other elements $\eta_j \in \C$ of magnitude~1 (not necessarily distinct from each other or from the $\xi_j$). 
Thus in $H(v)$ there are three pairs of vectors, derived from this pair of rows, having inner products $(v-1)\eta_1+(\overline{v}-1)\xi_1$, $(v-1)\eta_1+(\overline{v}-1)\xi_2$ and $(v-1)\eta_1+(\overline{v}-1)\xi_3$. 

We now show that there is no $a,b \in \R$ for which $H(a+ib)$ is a set of equiangular lines. Suppose otherwise, for a contradiction. Then the above three inner products have equal magnitude for $v = a+ib$, so that
\begin{align}
|(a-1+ib)\eta_1+(a-1-ib)\xi_1|  & = |(a-1+ib)\eta_1+(a-1-ib)\xi_2| \nonumber \\
				& = |(a-1+ib)\eta_1+(a-1-ib)\xi_3|. \label{EQ:innerprodMatt}
\end{align}
Notice that from Lemma~\ref{LEM:Hadamardconstant}, we cannot have $(a,b)=(1,0)$ as this would imply $d=0$. Therefore $(a-1+ib)\eta_1\neq0$ and $\sqrt{(a-1)^2+b^2}\neq 0$. Now the $\xi_j$ are all distinct with magnitude $1$, so $(a-1-ib)\xi_1$, $(a-1-ib)\xi_2$, $(a-1-ib)\xi_3$ are all distinct with magnitude $\sqrt{(a-1)^2+b^2}>0$. But then it is clear geometrically that only two of these three quantities can have equal magnitude after being added to $(a-1+ib)\eta_1 \ne 0$. This contradicts~\eqref{EQ:innerprodMatt}.

\end{proof}

\section*{Acknowledgements}

We thank Matt DeVos for his interest in this construction and the resulting helpful discussion and important insight regarding the proof of Theorem~\ref{THM:real}. We are grateful to Huangjun Zhu for his generosity in pointing out the unitary transformation involving~\eqref{EQ:unitary}.


\def\polhk#1{\setbox0=\hbox{#1}{\ooalign{\hidewidth
  \lower1.5ex\hbox{`}\hidewidth\crcr\unhbox0}}} \def\cprime{$'$}
\providecommand{\bysame}{\leavevmode\hbox to3em{\hrulefill}\thinspace}
\providecommand{\MR}{\relax\ifhmode\unskip\space\fi MR }
\providecommand{\MRhref}[2]{%
  \href{http://www.ams.org/mathscinet-getitem?mr=#1}{#2}
}
\providecommand{\href}[2]{#2}

\end{document}